\theoremstyle{definition}
\newtheorem{definition}{Definition}[section]
\theoremstyle{plain}
\newtheorem{lemma}[definition]{Lemma}
\newtheorem{theorem}[definition]{Theorem}
\newtheorem{proposition}[definition]{Proposition}
\newtheorem{corollary}[definition]{Corollary}
\theoremstyle{remark}
\newtheorem{remark}[definition]{Remark}
\newcommand{\myds}{\operatorname{D}_\Sigma}
\newcommand{\myint}{\operatorname{int}}
\begin{document}

\title[Dimension inequality for uniformly locally o-minimal structure]{Dimension inequality for a definably complete uniformly locally o-minimal structure of the second kind}
\author[M. Fujita]{Masato Fujita}
\address{Department of Liberal Arts,
Japan Coast Guard Academy,
5-1 Wakaba-cho, Kure, Hiroshima 737-8512, Japan}
\email{fujita.masato.p34@kyoto-u.jp}

\begin{abstract}
Consider a definably complete uniformly locally o-minimal expansion of the second kind of a densely linearly ordered abelian group.
Let $f:X \rightarrow R^n$ be a definable map, where $X$ is a definable set and $R$ is the universe of the structure.
We demonstrate the inequality $\dim(f(X)) \leq \dim(X)$ in this paper.
As a corollary, we get that the set of the points at which $f$ is discontinuous is of dimension smaller than $\dim(X)$.
We also show that the structure is defiably Baire in the course of the proof of the inequality.
\end{abstract}

\subjclass[2010]{Primary 03C64; Secondary 54F45, 54E52}

\keywords{uniformly locally o-minimal structure of the second kind, definably Baire structure, definably complete}

\maketitle

\section{Introduction}\label{sec:intro}

A uniformly locally o-minimal structure of the second kind was first defined and investigated in the author's previous work \cite{Fuji}.
It enjoys several tame properties such as local monotonicity.
In addition, it admits local definable cell decomposition when it is definably complete.

In \cite{Fuji}, the author defined dimension of a set definable in a locally o-minimal structure admitting local definable cell decomposition.
Many assertions on dimension known in o-minimal structures \cite{vdD} also hold true for locally o-minimal structures admitting local definable cell decomposition which are not necessarily definably complete \cite[Section 5.5]{Fuji}.
An exception is the inequality $\dim(f(X)) \leq \dim(X)$, where $f:X \rightarrow R^n$ is a definable map.
The author gave an example which does not satisfy the above dimension inequality in \cite[Remark 5.5]{Fuji}.
The structure in the example is not definably complete.
A question is whether the dimension inequality holds true when the structure is definably complete.
This paper gives an affirmative answer to this question.
Our main theorem is as follows:

\begin{theorem}\label{thm:main}
Let $\mathcal R=(R,<,+,0,\ldots)$ be a definably complete uniformly locally o-minimal expansion of the second kind of a densely linearly ordered abelian group.
The inequality 
\[
\dim(f(X)) \leq \dim(X)
\]
 holds true for any definable map $f:X \rightarrow R^n$.
\end{theorem}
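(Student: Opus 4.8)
The plan is to reduce the statement to the single case of a coordinate projection and then to convert the obstruction into a definable Baire argument that genuinely uses definable completeness. Write $X \subseteq R^m$ and pass to the graph $\Gamma_f = \{(x,f(x)) : x \in X\} \subseteq R^m \times R^n$. Since $f(X) = \pi(\Gamma_f)$ for the coordinate projection $\pi : R^m \times R^n \to R^n$, and since local definable cell decomposition lets me decompose $X$ into cells on which $f$ is continuous — over such a cell the graph is again a cell of the same dimension — one obtains $\dim \Gamma_f \le \dim X$. Thus the theorem follows as soon as I prove the \emph{projection inequality} $\dim \pi(A) \le \dim A$ for every definable $A$ and every coordinate projection $\pi$, because then $\dim f(X) = \dim \pi(\Gamma_f) \le \dim \Gamma_f \le \dim X$. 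By induction on the number of forgotten coordinates it suffices to treat the projection $\pi : R^{n+1} \to R^n$ that forgets the last coordinate.

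For this key case I would analyze the one-dimensional fibres $A_y = \{t \in R : (y,t) \in A\}$. By local o-minimality each $A_y$ is a finite union of points and open intervals, so $\pi(A)$ splits definably into the set $P_1$ of those $y$ whose fibre contains an interval and the set $P_0$ of those $y$ with finite fibre. Over $P_1$ the set $A$ contains a band, whence $\dim P_1 + 1 \le \dim A$ and in particular $\dim P_1 < \dim A$. Over $P_0$, definable completeness supplies the definable section $s(y) = \min A_y$; its graph lies inside $A$, and by decomposing $P_0$ into cells on which $s$ is continuous one finds a cell of dimension $\dim P_0$ whose graph is a cell of the same dimension sitting inside $A$, so $\dim P_0 \le \dim \Gamma_s \le \dim A$. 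Combining the two parts gives $\dim \pi(A) = \max(\dim P_0,\dim P_1) \le \dim A$.

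The main obstacle is that local definable cell decomposition is genuinely local: over a fixed bounded box there are only finitely many cells, but globally a definable set can present itself as a definably indexed family of lower-dimensional pieces whose union could, a priori, acquire extra dimension or engulf a full-dimensional box. This is precisely the phenomenon behind the failure of the inequality in the non-definably-complete example of \cite[Remark 5.5]{Fuji}. To exclude it I would first prove that the structure is definably Baire, namely that a definable set is never the union of a definable increasing family of nowhere dense definable subsets; this is exactly the step where definable completeness enters, through taking suprema of the bad parameters. The definably Baire property then upgrades the local computations above to global ones: the set on which the section $s$ fails to be continuous, together with the lower-dimensional debris produced when a local decomposition is patched over an unbounded set, is shown to be nowhere dense, so that the dimension counts carried out cell by cell remain valid over the whole set. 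Establishing this Baire property and correctly feeding it into the fibre-and-section argument is where I expect the real work to lie.
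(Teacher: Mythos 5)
Your overall skeleton (graph reduction, a projection inequality, and the definably Baire property doing the global work) points in the right direction, but your central fibre argument rests on claims that are false or unproven in this setting. First, ``by local o-minimality each $A_y$ is a finite union of points and open intervals'' is wrong: local o-minimality gives this structure only locally, near each point. A fibre $A_y$ with empty interior can be an infinite, unbounded, discrete closed set (think of $\mathbb{Z}$), so your dichotomy ``finite fibre versus fibre containing an interval'' is not exhaustive, and $\min A_y$ need not exist when the fibre is unbounded below. This global/local distinction is precisely what makes the theorem nontrivial; it is why the paper works with CBD pieces coming from a $\myds$-family (Lemma \ref{lem:decreasing2}) and takes a lexicographic minimum inside a CBD fibre, where the minimum is guaranteed to exist, rather than a minimum of the whole fibre. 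Second, ``over $P_1$ the set $A$ contains a band, whence $\dim P_1 + 1 \le \dim A$'' is unjustified: the intervals in the fibres vary with $y$, so $A$ contains no set of the form $(\text{box}) \times (\text{interval})$; to get even $\dim P_1 \le \dim A$ you must choose, definably in $y$, a point of $\myint(A_y)$ and then make that choice continuous on a set of dimension $\dim P_1$ --- which is another instance of exactly the section-plus-continuity problem you are trying to solve, not something you may assert in one line.

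Third, your arguments repeatedly invoke decomposing a definable set ``into cells on which $s$ is continuous''; global definable cell decomposition does not exist here (only local decomposition over a bounded box), so this must be replaced by the paper's Lemma \ref{lem:cont_function}: a definable map on an open set is continuous on some nonempty definable open subset, proved via the Baire property (Theorem \ref{thm:baire}) applied to a $\myds$-family of the graph, and then pulled back along a definable box-embedding (dimension here is defined through embedded open boxes, \cite[Definition 5.1]{Fuji}). You correctly identify definable Baireness as the crucial input --- this matches the paper, whose Sections \ref{sec:basic} and \ref{sec:baire} are devoted to it --- but your mechanism ``the discontinuity locus of $s$ is nowhere dense'' is neither proved nor needed: producing a single open box on which a section is continuous suffices. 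For comparison, the paper avoids your fibre dichotomy and the coordinate-by-coordinate induction altogether: it reduces to the case where $f(X)$ is an open box of dimension $d=\dim f(X)$ and $f$ is a coordinate projection (via Lemma \ref{lem:dim_pre}), writes $X$ as a union of a $\myds$-family of CBD sets, uses Baireness (Lemma \ref{lem:basic2}) to find one CBD piece whose image has nonempty interior, and embeds a $d$-dimensional box into that piece by a lexmin section made continuous by Lemma \ref{lem:cont_function}. Until the three points above are repaired --- in effect by rebuilding this machinery --- your proposal is a plan, not a proof.
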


We get the following corollary:
\begin{corollary}\label{cor:discont}
Let $\mathcal R=(R,<,+,0,\ldots)$ be the same structure as Theorem \ref{thm:main}.
Let $f:X \rightarrow R$ be a definable function.
The set of the points at which $f$ is discontinuous is of dimension smaller than $\dim(X)$.
\end{corollary}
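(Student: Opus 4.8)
The plan is to deduce the corollary from Theorem~\ref{thm:main} together with local definable cell decomposition, following the pattern familiar from the o-minimal case, but taking care that each dimension-theoretic step used is one that is now legitimate once the image inequality is available. First I would verify that the discontinuity set is definable: continuity of $f$ at a point $x$ is expressed by the first-order condition that for every positive $\varepsilon$ there be a positive $\delta$ with $|f(y)-f(x)|<\varepsilon$ for all $y\in X$ lying in the box of radius $\delta$ about $x$, a box being definable from the group operation and the order. Hence $D=\{x\in X: f \text{ is discontinuous at } x\}$ is a definable subset of $X$; set $d=\dim X$.

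Next I would apply local definable cell decomposition to obtain a decomposition of $X$ into finitely many cells $C_1,\dots,C_k$ on each of which $f$ restricts to a continuous function, and then separate the cells by dimension: let $X_{<d}$ be the union of those of dimension strictly less than $d$, so that $\dim X_{<d}<d$. The heart of the argument is to confine the discontinuities to the lower cells and to the frontiers of the top cells, that is, to prove
\[
D \subseteq X_{<d} \cup \bigcup_{i} \left(\overline{C_i}\setminus C_i\right).
\]
To see this, fix $x\in C_i$ with $\dim C_i=d$ and suppose $f$ is discontinuous at $x$. Since only finitely many cells occur and $f|_{C_i}$ is continuous, any net $x_\nu\to x$ witnessing the discontinuity admits a subnet contained in a single cell $C_j$; then $x\in\overline{C_j}$, and because such a witnessing subnet cannot lie in $C_i$ itself we have $j\neq i$, whence $x\in\overline{C_j}\setminus C_j$. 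Here I would use definable completeness, and the definably Baire property established in the course of proving Theorem~\ref{thm:main}, to rule out the degenerate possibility that the values $f(x_\nu)$ merely escape toward the ends of $R$.

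Finally I would bound the dimension. Invoking the frontier inequality $\dim(\overline{C_j}\setminus C_j)<\dim C_j\le d$, each frontier has dimension at most $d-1$; combined with $\dim X_{<d}\le d-1$ and the monotonicity and finite-additivity of dimension, the displayed inclusion yields $\dim D\le d-1<\dim X$, as required.

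The main obstacle, and the place where Theorem~\ref{thm:main} is genuinely needed, is the frontier inequality $\dim(\overline{C}\setminus C)<\dim C$ in this local, not-necessarily-o-minimal setting. In o-minimal structures this rests on the full dimension theory, whereas in a locally o-minimal structure admitting cell decomposition but \emph{not} definably complete it can break down in parallel with the image inequality, as the example of \cite[Remark~5.5]{Fuji} shows. The point is that Theorem~\ref{thm:main} is precisely what prevents the closure operation from raising dimension, so that once the image inequality is in hand the standard frontier estimate, and hence the corollary, goes through.
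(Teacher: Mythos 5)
Your proposal has a genuine gap at its very first step: you ask for a decomposition of $X$ into \emph{finitely many} cells $C_1,\dots,C_k$ on each of which $f$ is continuous. No such global decomposition exists in this setting. The structure is only (uniformly) locally o-minimal of the second kind, and the cell decomposition theorem available here, \cite[Theorem 4.2]{Fuji}, is \emph{local}: given a point $a$ it produces an open box around $a$ and a finite cell decomposition of that box only. A definable set in such a structure can be, for instance, an infinite closed discrete set, which admits no finite partition into cells; so the inclusion $D \subseteq X_{<d} \cup \bigcup_i (\overline{C_i}\setminus C_i)$ has nothing to be built from. Moreover, even locally, arranging that $f$ be continuous on the pieces is not free: in this paper, mere generic continuity of a definable map on an open set (Lemma \ref{lem:cont_function}) already requires the definably Baire property (Theorem \ref{thm:baire}), which is the technical heart of the paper; your appeal to ``nets witnessing discontinuity'' and to definable completeness to control escaping values does not substitute for this.

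Your final paragraph also misplaces where Theorem \ref{thm:main} is needed. The frontier inequality $\dim(\overline{C}\setminus C)<\dim C$ is one of the dimension facts that \emph{do} hold in locally o-minimal structures admitting local definable cell decomposition without definable completeness (\cite[Theorem 5.6]{Fuji}; it is used freely in Lemma \ref{lem:decreasing}). What fails without definable completeness, and what Theorem \ref{thm:main} supplies, is the \emph{image} inequality under projections. This is exactly how the paper's proof runs: it lifts the discontinuity set to the graph $G=\Gamma(f)$, setting $\mathcal E=\{(x,y)\in X\times R \mid y=f(x),\ f \text{ discontinuous at } x\}$; the bound $\dim(\mathcal E)<\dim(G)$ is a purely local statement on the graph, obtained from \cite[Theorem 4.2, Corollary 5.3]{Fuji}; and only then does Theorem \ref{thm:main} enter, twice, to transfer this to the domain: once (with Lemma \ref{lem:dim_pre}) to get $\dim(G)=\dim(X)$, and once to conclude $\dim(D)=\dim(\pi(\mathcal E))\leq\dim(\mathcal E)<\dim(G)=\dim(X)$. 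Working on the graph rather than in $X$ is what makes a local cell-decomposition argument suffice; your attempt to argue directly in $X$ is precisely the move that the weak (local) form of cell decomposition cannot support.
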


The author proved the dimension inequality in \cite[Theorem 2.4]{Fuji2} when the universe of the structure is the set of reals.
This fact is not a direct corollary of the above theorem because the structure should be an expansion of an abelian group in the theorem.

The paper is organized as follows.
In Section \ref{sec:preliminaries}, we first review definitions used in the paper.
We prove several basic facts in Section \ref{sec:basic}.
Satisfaction of the dimension inequality is relevant to defiably Baire property introduced in \cite{FS}.
Section \ref{sec:baire} treats the definably Baire property.
We show that a definably complete uniformly locally o-minimal expansion of the second kind of a densely linearly ordered abelian group is definably Baire in the section.
We finally demonstrate Theorem \ref{thm:main} in Section \ref{sec:proof}.

We introduce the terms and notations used in this paper.
The term `definable' means `definable in the given structure with parameters' in this paper.
A \textit{CBD} set is a closed, bounded and definable set.
For any set $X \subset R^{m+n}$ definable in a structure $\mathcal R=(R,\ldots)$ and for any $x \in R^m$, the notation $X_x$ denotes the fiber defined as $\{y \in R^n\;|\; (x,y) \in X\}$.
For a linearly ordered structure $\mathcal R=(R,<,\ldots)$, an open interval is a definable set of the form $\{x \in R\;|\; a < x < b\}$ for some $a,b \in R$.
It is denoted by $(a,b)$ in this paper.
We define a closed interval in the same manner and it is denoted by $[a,b]$.
An open box in $R^n$ is the direct product of $n$ open intervals.
A closed box is defined similarly.
Let $A$ be a subset of a topological space.
The notations $\myint(A)$ and $\overline{A}$ denote the interior and the closure of the set $A$, respectively.
The notation $|S|$ denotes the cardinality of a set $S$.

\section{Definitions}\label{sec:preliminaries}

We review the definitions given in the previous works.
The definition of a definably complete structure is found in \cite{M} and \cite{DMS}.
A locally o-minimal structure is defined and investigated in \cite{TV}.
Readers can find the definitions of uniformly locally o-minimal structures of the second kind and locally o-minimal structures admitting local definable cell decomposition in \cite{Fuji}.
We use $\myds$-sets introduced in \cite{DMS}.

\begin{definition}[$\myds$-sets]
Consider an expansion of a linearly ordered structure $\mathcal R=(R,<,0,\ldots)$.
A \textit{parameterized family} of definable sets is the family of the fibers of a definable set.
A parameterized family $\{X_{r,s}\}_{r>0,s>0}$ of CBD subsets of $R^n$ is called a \textit{$\myds$-family} if $X_{r,s} \subset X_{r',s}$ and  $X_{r,s'} \subset X_{r,s}$ whenever $r \leq r'$ and $s \leq s'$.
A definable subset $X$ of $R^n$ is a \textit{$\myds$-set} if $X = \displaystyle\bigcup_{r>0,s>0} X_{r,s}$ for some $\myds$-family $\{X_{r,s}\}_{r>0,s>0}$.

A parameterized family of definable sets $\{X_{s}\}_{s>0}$  is a \textit{definable decreasing family of CBD sets} if we have $X_s=X_{r,s}$ for some $\myds$-family $\{X_{r,s}\}_{r>0,s>0}$ with $X_{r_1,s}=X_{r_2,s}$ for all $r_1$, $r_2$ and $s$.
\end{definition}

We next review definably Baire property introduced in \cite{FS}.
\begin{definition}
Consider an expansion of a densely linearly ordered structure.
A parameterized family of definable sets $\{X_{r}\}_{r>0}$  is called a \textit{definable increasing family} if $X_r \subset X_{r'}$ whenever $0<r<r'$.
A definably complete expansion of a densely linearly ordered structure is \textit{definably Baire} if the union $\bigcup_{r>0} X_r$ of any definable increasing family $\{X_r\}_{r>0}$ with $\myint\left(\overline{X_r}\right)=\emptyset$ has an empty interior.
\end{definition}

The following proposition is a direct corollary of the local definable cell decomposition theorem \cite[Theorem 4.2]{Fuji}. 

\begin{proposition}\label{prop:baire_basic}
Consider a definably complete uniformly locally o-minimal structure of the second kind.
It is definably Baire if and only if the union $\bigcup_{r>0} X_r$ of any definable increasing family $\{X_r\}_{r>0}$ with $\myint(X_r)=\emptyset$ has an empty interior.
\end{proposition}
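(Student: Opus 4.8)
The plan is to reduce the asserted equivalence to a single pointwise fact about definable sets. The definably Baire property and the property in the proposition differ only in the hypothesis imposed on the members of the increasing family: the former requires $\myint\left(\overline{X_r}\right) = \emptyset$ for every $r$, the latter requires $\myint(X_r) = \emptyset$ for every $r$, while both draw the same conclusion, that $\bigcup_{r>0} X_r$ has empty interior. Hence it suffices to prove that, for every definable set $X \subseteq R^n$,
\[
\myint(X) = \emptyset \iff \myint\left(\overline{X}\right) = \emptyset .
\]
Granting this, a definable increasing family satisfies one hypothesis for all $r$ if and only if it satisfies the other for all $r$, so the two properties quantify over exactly the same families and are therefore equivalent.

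To prove the pointwise fact, one implication is purely formal: from $X \subseteq \overline{X}$ we get $\myint(X) \subseteq \myint\left(\overline{X}\right)$, so $\myint\left(\overline{X}\right) = \emptyset$ forces $\myint(X) = \emptyset$. For the reverse implication I would appeal to the local definable cell decomposition theorem \cite[Theorem 4.2]{Fuji} and the dimension theory it supports. A definable subset of $R^n$ has nonempty interior if and only if it contains an open box, which happens precisely when its dimension equals $n$; thus $\myint(X) = \emptyset$ is equivalent to $\dim(X) < n$. Applying the closure property of dimension, namely $\dim\left(\overline{X}\right) = \dim(X)$ (equivalently $\dim\left(\overline{X} \setminus X\right) < \dim(X)$), which is among the dimension assertions valid for locally o-minimal structures admitting local definable cell decomposition, we obtain $\dim\left(\overline{X}\right) < n$ and hence $\myint\left(\overline{X}\right) = \emptyset$.

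The main point to be careful about is that all the facts about dimension used here---especially the equality $\dim\left(\overline{X}\right) = \dim(X)$ and the characterization of empty interior by dimension---are genuinely available in the present setting and not only in o-minimal structures; this is exactly where definable completeness enters, through the local definable cell decomposition theorem. One also uses tacitly that $\overline{X}$ is again definable, so that its dimension is defined. No conversion is needed on the conclusion side, since both properties assert that the union of the family has empty interior, which makes the reduction in the first paragraph an exact equivalence rather than merely an implication.
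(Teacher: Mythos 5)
Your proposal is correct and has the same skeleton as the paper's one-line proof: both reduce the equivalence to the pointwise fact that a definable set $X_r$ has empty interior if and only if $\overline{X_r}$ does, observing that the two Baire-type properties then quantify over exactly the same families. The only difference is in how that pointwise fact is justified: the paper invokes the local definable cell decomposition theorem \cite[Theorem 4.2]{Fuji} directly ($\myint\left(\overline{X_r}\right)\neq\emptyset$ iff $X_r$ contains an open cell), whereas you route through the dimension theory of \cite[Section 5]{Fuji} (namely $\myint(X)=\emptyset$ iff $\dim(X)<n$, together with $\dim\left(\overline{X}\right)=\dim(X)$); since that dimension theory is itself derived from the same cell decomposition theorem and, as you note, is valid in this non-o-minimal setting, both justifications are sound.
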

\begin{proof}
Because $\myint\left(\overline{X_r}\right)\not=\emptyset$ iff $\myint(X_r) \not=\emptyset$ iff $X_r$ contains an open cell in this case by \cite[Theorem 4.2]{Fuji}. 
\end{proof}

The dimension of a set definable in a locally o-minimal structure admitting local definable cell decomposition is defined in \cite[Section 5]{Fuji}.
We get the following lemma on the dimension of the projection image.
A lemma similar to it is found in \cite{Fuji2}, but we give a complete proof here.

\begin{lemma}\label{lem:dim_pre}
Consider a locally o-minimal structure $\mathcal R=(R,<,\ldots)$ admitting local definable cell decomposition.
Let $X$ be a definable subset of $R^{m+n}$ and $\pi: R^{m+n} \rightarrow R^m$ be a coordinate projection.
Assume that  the fibers $X_x$ are of dimension $\leq 0$ for all $x \in R^m$.
Then, we have $\dim X \leq \dim \pi(X)$.
\end{lemma}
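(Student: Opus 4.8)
The plan is to reduce the statement to the case of a single cell by means of the local definable cell decomposition theorem \cite[Theorem 4.2]{Fuji}, and then to show that a cell all of whose $\pi$-fibers are at most zero-dimensional is the graph of a continuous definable map over its projection. Since dimension is invariant under definable homeomorphism and the dimension of the graph of a continuous definable map coincides with that of its domain, the inequality for a single cell will follow at once, and the general case will be recovered by passing to a supremum.

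More precisely, first I would cover $R^{m+n}$ by open boxes and, on each box, decompose $X$ into finitely many cells. Because dimension is local and the dimension of a union equals the supremum of the dimensions of its members \cite[Section 5]{Fuji}, it then suffices to prove $\dim C \leq \dim \pi(C)$ for each cell $C \subseteq X$; the desired inequality follows from the inclusion $\pi(C) \subseteq \pi(X)$. Here I rely on the fact that cells are adapted to the coordinate order, so that $D := \pi(C)$ is again a cell and $C \subseteq D \times R^n$.

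Next I would analyze the fibers. Each cell $C$ is constructed coordinate by coordinate over its projection $D$ in $R^m$, the cell being successively extended in the directions $m+1, \ldots, m+n$, where in each such direction the extension is either the graph of a continuous definable function or an open band between two such functions. For $x \in D$ the fiber $C_x$ is a cell in $R^n$ whose dimension equals the number of band steps occurring among the last $n$ coordinates. The hypothesis gives $C_x \subseteq X_x$, whence $\dim C_x \leq 0$; since a single band step would force $\dim C_x \geq 1$, there are no band steps, and $C$ is the graph of a continuous definable map $\varphi : D \to R^n$. Consequently $\pi|_C$ is a definable homeomorphism onto $D$, with inverse $x \mapsto (x, \varphi(x))$, so that $\dim C = \dim D = \dim \pi(C)$, and taking the supremum over all cells yields $\dim X \leq \dim \pi(X)$.

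The main obstacle I anticipate is organizational rather than conceptual: one must make the passage from the local, possibly infinite cell decomposition to a clean global conclusion, verifying that the dimension-theoretic facts invoked -- locality, behaviour under unions, invariance under definable homeomorphism, and the equality of the dimension of a graph of a continuous definable map with that of its domain -- are genuinely available here. These are precisely the properties known to transfer from the o-minimal setting \cite[Section 5]{Fuji}, the sole genuine exception being the very inequality $\dim(f(X)) \leq \dim(X)$ under consideration. Some care is therefore warranted to ensure that none of the auxiliary facts used in the argument secretly depends on that inequality.
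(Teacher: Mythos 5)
Your proposal is correct and follows essentially the same route as the paper: localize using the fact that $\dim X$ is the supremum of $\dim(X\cap B)$ over open boxes $B$ (\cite[Corollary 5.3]{Fuji}), establish the local equality $\dim(X\cap B)=\dim\pi(X\cap B)$ under the zero-dimensional-fiber hypothesis, and finish with monotonicity of dimension (\cite[Lemma 5.1]{Fuji}). The only difference is that the paper obtains the local equality by citing \cite[Lemma 5.4]{Fuji} as a black box, whereas you re-derive it from local cell decomposition (a cell whose fibers over the projection are zero-dimensional has no band steps in the last $n$ coordinates, hence is the graph of a continuous definable map, and a graph has the dimension of its domain); that is precisely the content of the cited lemma, so your argument is a self-contained unpacking of the paper's short proof.
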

\begin{proof}
For any $(a,b) \in R^m \times R^n$, there exist open boxes $B_a \subset R^m$ and $B_b \subset  R^n$ with $(a,b) \in B_a \times B_b$ and $\dim (X \cap (B_a \times B_b)) = \dim \pi(X \cap (B_a \times B_b))$ by \cite[Lemma 5.4]{Fuji}.
We have $\dim \pi(X \cap (B_a \times B_b)) \leq \dim \pi(X)$ by \cite[Lemma 5.1]{Fuji}.
On the other hand, we have $\dim (X) = \displaystyle\sup_{(a,b) \in R^m \times R^n} \dim (X \cap (B_a \times B_b))$ by \cite[Corollary 5.3]{Fuji}.
We have finished the proof.
\end{proof}

\section{Preliminaries}\label{sec:basic}
From now on, we consider a definably complete uniformly locally o-minimal expansion of the second kind of a densely linearly ordered abelian group $\mathcal R=(R,<,+,0,\ldots)$.
We demonstrate several basic facts in this section.

\begin{lemma}\label{lem:decreasing}
Let $X$ be a bounded definable set.
There exists a definable decreasing family of CBD sets $\{X_s\}_{s>0}$ with $X=\bigcup_{s>0}X_s$.
\end{lemma}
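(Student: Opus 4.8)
The plan is to realize $X$ as a \emph{definable $F_\sigma$}: since the family is decreasing in $s$, its union is the increasing union obtained as $s \to 0^+$, and because $X = \bigcup_{s>0} X_s$ forces $X_s \subseteq X$, the task is exactly to exhibit $X$ as an increasing (as $s \to 0$) union of closed, bounded, definable subsets, uniformly definable in $s$. I would proceed by induction on $d = \dim X$. Set $B = \overline{X} \setminus X$ and use the decomposition $X = (\overline{X} \setminus \overline{B}) \cup (X \cap \overline{B})$; here $\overline{X}$ is CBD because $X$ is bounded, and the two pieces are disjoint with $\overline{X} \setminus \overline{B} \subseteq X$ (a point of $\overline{X}$ lying outside $\overline{B} \supseteq B$ lies outside $\overline{X} \setminus X$, hence in $X$).

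The first piece $\overline{X} \setminus \overline{B} = \overline{X} \cap (R^n \setminus \overline{B})$ is locally closed, and this is the workhorse: using the sup-norm $\|z\| = \max_i |z_i|$ and definable completeness to form $\delta(x) = \operatorname{dist}(x, \overline{B}) = \inf\{\|x-y\| : y \in \overline{B}\}$, the function $\delta$ is definable and (being sup-norm $1$-Lipschitz) continuous, so $C_s := \{x \in \overline{X} : \delta(x) \geq s\}$ is CBD and decreasing in $s$, with $\bigcup_{s>0} C_s = \overline{X} \cap \{\delta > 0\} = \overline{X} \setminus \overline{B}$ (as $\overline{B}$ is closed). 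For the second piece I would invoke the frontier inequality $\dim(\overline{X} \setminus X) < \dim X$, one of the standard dimension properties available in this setting by \cite{Fuji}; together with $\dim \overline{B} = \dim B$ it gives $\dim(X \cap \overline{B}) \leq \dim B < d$, so the induction hypothesis yields a decreasing family $\{D_s\}_{s>0}$ of CBD sets with $X \cap \overline{B} = \bigcup_{s>0} D_s$. Then $X_s := C_s \cup D_s$ is CBD, decreasing in $s$, contained in $X$, and $\bigcup_{s>0} X_s = X$. The induction bottoms out cleanly: if $X$ is closed (in particular whenever $\dim X = 0$, since then $B = \emptyset$ by the frontier inequality) the constant family $X_s = X$ works, and $X = \emptyset$ is trivial; one checks that putting $X_{r,s} = X_s$ exhibits $\{X_s\}$ as a genuine $\myds$-family, so it is a definable decreasing family of CBD sets.

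The conceptual crux, and the reason the proof needs the induction rather than a one-line distance argument, is that the naive candidate $\{x \in \overline{X} : \operatorname{dist}(x, \overline{X} \setminus X) \geq s\}$ recovers only $\overline{X} \setminus \overline{(\overline{X} \setminus X)}$ and can omit points of $X$ that are limits of the frontier $\overline{X} \setminus X$ (for instance the origin for $X = \{(0,0)\} \cup \{(x,y) : 0 < x \leq 1,\ 0 \leq y \leq 1\}$). These ``sticky'' points form the lower-dimensional set $X \cap \overline{(\overline{X} \setminus X)}$, which is precisely what the inductive step peels off. The main things to verify carefully are therefore the frontier inequality in this non-o-minimal setting (available from the dimension theory of \cite{Fuji}, and not circular since that is prior work) and the routine facts that closure, distance, and the resulting families are definable and that $\delta$ is continuous, all of which rest on densely-ordered-group arithmetic together with definable completeness.
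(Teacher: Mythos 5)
Your proposal is correct and follows essentially the same route as the paper's own proof: induction on $\dim X$, peeling off the lower-dimensional piece $X \cap \overline{\overline{X}\setminus X}$ via the frontier inequality, and covering the remaining locally closed part $\overline{X}\setminus\overline{\overline{X}\setminus X}$ by the distance-function sets $\{x \in \overline{X} : \operatorname{dist}(x,\overline{\overline{X}\setminus X}) \geq s\}$. The only cosmetic differences are that the paper handles the base case $\dim X = 0$ by citing discreteness and closedness of zero-dimensional sets directly, and cites $\dim\overline{\partial X} < \dim X$ as a single result rather than splitting it into the frontier inequality plus invariance of dimension under closure.
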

\begin{proof}
We demonstrate the lemma by the induction on $d=\dim(X)$.
When $d=0$, $X$ is discrete and closed by \cite[Corollary 5.3]{Fuji}.
We have only to set $X_s=X$ for all $s>0$ in this case.

We next consider the case in which $d>0$.
Let $\partial X$ denote the frontier of $X$.
We have $\dim\overline{\partial X}<d$ by \cite[Theorem 5.6]{Fuji}.
We get $\dim(X \cap \overline{\partial X}) < d$ by \cite[Proposition 5.1]{Fuji}.
There exists a definable decreasing family of CBD sets $\{Y_s\}_{s>0}$ with $X \cap \overline{\partial X}=\bigcup_{s>0}Y_s$ by the induction hypothesis.
Set $Z_s=\{x \in \overline{X}\;|\; d(x, \overline{\partial X}) \geq s\}$ for all $s>0$, where the notation $d(x, \overline{\partial X})$ denotes the distance of the point $x$ to the set $\overline{\partial X}$.
They are CBD.
It is obvious that $\bigcup_{s>0}Z_s = \overline{X} \setminus \overline{\partial X} = X \setminus \overline{\partial X}$.
Set $X_s=Y_s \cup Z_s$.
The family $\{X_s\}_{s>0}$ is a definable decreasing family we are looking for.
\end{proof}

\begin{lemma}\label{lem:decreasing2}
Any definable set $X$ is a $\myds$-set.
That is, there exists a $\myds$-family $\{X_{r,s}\}_{r>0,s>0}$ with $X=\bigcup_{r>0,s>0}X_{r,s}$.
\end{lemma}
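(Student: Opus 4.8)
The plan is to prove the statement by induction on $d = \dim X$, reusing the construction in the proof of Lemma~\ref{lem:decreasing} but now carrying a bounding box as a second, increasing parameter so that unbounded sets are permitted. Observe first that for a bounded $X$ the family produced by Lemma~\ref{lem:decreasing}, taken constant in $r$, is already a $\myds$-family; so the whole content lies in the unbounded case, which I would handle by truncating with the closed boxes $[-r,r]^n$. Concretely, I would look for a family of the form $X_{r,s} = Y_{r,s} \cup Z_{r,s}$, where the $r$-direction is governed by the box $[-r,r]^n$ and the $s$-direction by the distance to the frontier.

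For the base case $d = 0$, the set $X$ is discrete and closed by \cite[Corollary 5.3]{Fuji}, so $X_{r,s} := X \cap [-r,r]^n$ is CBD, is independent of $s$, grows with $r$, and has union $X$; hence it is the desired $\myds$-family. For the inductive step $d > 0$, let $\partial X$ be the frontier; by \cite[Theorem 5.6]{Fuji} and \cite[Proposition 5.1]{Fuji} the set $X \cap \overline{\partial X}$ has dimension $< d$, so the induction hypothesis (this very lemma applied to a possibly unbounded set of smaller dimension) yields a $\myds$-family $\{Y_{r,s}\}$ with union $X \cap \overline{\partial X}$. I would then set $Z_{r,s} = \{x \in \overline{X} \cap [-r,r]^n \mid d(x, \overline{\partial X}) \geq s\}$, where the distance is given by the infimum furnished by definable completeness, and put $X_{r,s} = Y_{r,s} \cup Z_{r,s}$. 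The verifications are routine: $Z_{r,s}$ is closed because the distance function is continuous, bounded because it lies in the box, and contained in $X$ because $d(x, \overline{\partial X}) \geq s > 0$ forces $x \notin \overline{\partial X}$, hence $x \notin \partial X$, while $x \in \overline{X} = X \cup \partial X$ gives $x \in X$; moreover $\bigcup_{r,s} Z_{r,s} = \overline{X} \setminus \overline{\partial X} = X \setminus \overline{\partial X}$, so $\bigcup_{r,s} X_{r,s} = X$. The degenerate case $\partial X = \emptyset$, in which $X$ is closed and the distance is vacuous, is disposed of directly by $X_{r,s} = X \cap [-r,r]^n$.

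The main obstacle I anticipate is not any single computation but securing the two monotonicities of a $\myds$-family \emph{simultaneously} while keeping the family jointly definable in $(r,s)$. It is tempting to apply Lemma~\ref{lem:decreasing} separately to each bounded truncation $X \cap [-r,r]^n$, but the decreasing families so obtained need not nest as $r$ grows, because the frontier of $X \cap [-r,r]^n$ picks up pieces of the box boundary that move with $r$, and monotonicity in $r$ would then fail. The point of the construction above is that it sidesteps this by computing the frontier of $X$ itself once and only afterwards intersecting with $[-r,r]^n$: increasing $r$ merely enlarges the box, so $Z_{r,s} \subseteq Z_{r',s}$ for $r \leq r'$, while increasing $s$ merely tightens the constraint $d(x, \overline{\partial X}) \geq s$, so $Z_{r,s'} \subseteq Z_{r,s}$ for $s \leq s'$. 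Both monotonicities are then inherited by $X_{r,s} = Y_{r,s} \cup Z_{r,s}$ from those of $\{Y_{r,s}\}$ and $\{Z_{r,s}\}$, and joint definability in $(r,s)$ is manifest.
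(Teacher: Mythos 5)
Your proposal is correct and takes essentially the same route as the paper: the paper's own proof merely sets $X_r = X \cap [-r,r]^n$, asserts that the sets $X_{r,s} \subset X_r$ can be constructed ``in the same manner as the proof of Lemma~\ref{lem:decreasing},'' and omits all details, and your induction on $\dim X$ with $X_{r,s} = Y_{r,s} \cup Z_{r,s}$ (global frontier governing $s$, box $[-r,r]^n$ governing $r$) is precisely the natural way to fill those details in. Your warning about the order of operations is well taken and is exactly the point the paper's omitted verification must respect: applying Lemma~\ref{lem:decreasing} separately to each truncation $X_r$ would use the frontiers $\partial X_r$, which grow with $r$ (e.g.\ for $X = [a,b) \times \{0\}$ a small box meets $X$ in a single closed point with empty frontier, while a larger box sees the frontier point $(b,0)$ and its distance condition then excludes that point for large $s$), so the resulting sets need not be increasing in $r$, whereas computing $\overline{\partial X}$ once for $X$ itself and only then intersecting with $[-r,r]^n$ makes both monotonicities immediate.
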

\begin{proof}
Let $X$ be a definable subset of $R^n$.
Set $X_r=X \cap [-r,r]^n$.
We can construct subsets $X_{r,s}$ of $X_r$ satisfying the condition in the same manner as the proof of Lemma \ref{lem:decreasing2}.
We omit the details.
\end{proof}

\begin{lemma}\label{lem:basic1}
Let $X$ be a bounded definable set and $\{X_s\}_{s>0}$ be a definable decreasing family of CBD sets with $X=\bigcup_{s>0}X_s$.
The CBD set $X_s$ has a nonempty interior for some $s>0$ if $X$ has a nonempty interior.
\end{lemma}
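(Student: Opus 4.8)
The plan is to prove the statement by exhibiting an explicit $s$, using throughout the equivalence, valid in this setting by \cite[Theorem 4.2]{Fuji}, that a definable subset of $R^n$ has nonempty interior if and only if it contains an open cell, equivalently has dimension $n$. I will first settle the one-dimensional case $n=1$ and then reduce the general case to it by fibering off the last coordinate. The central device is a definable threshold function recording, for each base point, how large $s$ may be taken while the relevant fiber still meets $X_s$ in a set with interior; the monotonicity built into a $\myds$-family makes these thresholds well behaved.

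For $n=1$, set $\sigma(x)=\sup\{s>0: x\in X_s\}$. Since $\{X_s\}$ is decreasing, the set $\{s>0:x\in X_s\}$ is an initial segment of $R_{>0}$, and it is nonempty for every $x\in X$ because $X=\bigcup_{s>0}X_s$. The locus where this segment is unbounded is $\bigcap_{s>0}X_s$; if it has nonempty interior then, as it is contained in $X_1$, the set $X_1$ already has nonempty interior and we are done. Otherwise this locus is a definable set of dimension $\le 0$, so by \cite[Theorem 5.6]{Fuji} its closure still has empty interior, and removing that closure from $\myint(X)$ leaves a nonempty open set, hence a subinterval $I'$, on which $\sigma$ is finite, positive and definable. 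Applying local definable cell decomposition \cite[Theorem 4.2]{Fuji} to the graph of $\sigma$ produces an open subinterval on which $\sigma$ is continuous; choosing a point $x_0$ there and, by density of the order, some $s_0$ with $0<s_0<\sigma(x_0)$, continuity yields a subinterval on which $\sigma>s_0$. For each such $x$ there is $s_1>s_0$ with $x\in X_{s_1}\subset X_{s_0}$, so this subinterval lies in $X_{s_0}$ and $X_{s_0}$ has nonempty interior.

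For general $n$, write a point of $R^n$ as $(x',t)$ with $x'\in R^{n-1}$ and take an open box $B'\times I\subset\myint(X)$. For fixed $x'$ the fibers $\{(X_s)_{x'}\}_{s>0}$ form a definable decreasing family of CBD subsets of $R$ whose union is $X_{x'}\supset I$, so the one-dimensional case applies fiberwise and the definable threshold $\sigma(x')=\sup\{s>0:(X_s)_{x'}\text{ has nonempty interior}\}$ is positive wherever finite. Exactly as above, the locus where $\sigma$ is infinite either has nonempty interior in $R^{n-1}$, over which every fiber of $X_1$ contains an interval and the final step below applies with $s_0=1$, or has dimension $\le n-2$ and may be discarded using \cite[Theorem 5.6]{Fuji}; on the remaining open box cell decomposition makes $\sigma$ continuous, and passing to a value $s_0$ strictly below $\sigma$ at an interior point gives an open box $\widehat B\subset R^{n-1}$ over which every fiber $(X_{s_0})_{x'}$ has nonempty interior. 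The final and most delicate step is to upgrade this fiberwise statement to the honest conclusion $\myint(X_{s_0})\ne\emptyset$: I consider the definable set $U=\{(x',t):x'\in\widehat B,\ t\in\myint((X_{s_0})_{x'})\}\subset X_{s_0}$, all of whose fibers over the open box $\widehat B$ are nonempty and open, and invoke local definable cell decomposition to see that some cell of $U$ through a fiber-interior point is a band, hence open in $R^n$; this gives $\myint(X_{s_0})\supset\myint(U)\ne\emptyset$. The main obstacle is precisely this last passage from ``every fiber contains an interval'' to ``the set contains a box,'' since the intervals vary with $x'$ and only the tameness provided by cell decomposition prevents them from degenerating; a secondary technical point is the definable bookkeeping of the thresholds and of the loci where they become infinite.
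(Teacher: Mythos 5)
Both hinge-points of your induction rest on the same unjustified claim: that local definable cell decomposition, applied to a graph, produces a cell lying over an \emph{open} subset of the base (this is how you get continuity of $\sigma$ on a subinterval/sub-box, and again how you get an open band inside $U$). This does not follow from \cite[Theorem 4.2]{Fuji}. That theorem is local: it decomposes an arbitrarily small box $B$ around one chosen point, and the portion of $\Gamma(\sigma)$ (resp.\ of $U$) lying inside $B$ need not project onto a neighborhood of the chosen base point. Nothing available at this stage of the development rules out a definable function that oscillates with large amplitude arbitrarily close to every point of its domain, nor fiber intervals of $U$ that shrink and drift as the base point moves; in either situation every cell of $\Gamma(\sigma)\cap B$ (resp.\ $U\cap B$) sits over a lower-dimensional cell of the base, and no open set of continuity and no band can be extracted. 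In one variable the continuity claim can be repaired by citing the local monotonicity theorem of \cite{Fuji} instead of cell decomposition. But in the inductive step the statement you need --- a definable function on an open box in $R^{n-1}$ is continuous on some nonempty open subset --- is precisely Lemma \ref{lem:cont_function} of this paper, which is proved only after the definably Baire property (Theorem \ref{thm:baire}), whose proof in turn rests on the present lemma; and your final passage from ``every fiber $(X_{s_0})_{x'}$ has interior'' to ``$X_{s_0}$ has interior'' is precisely Lemma \ref{lem:interval}, likewise downstream of this lemma. As written, your argument is therefore circular at both of its crucial points.

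The paper's proof shows what is needed to avoid this circle: the \emph{uniformity} in ``uniformly locally o-minimal of the second kind.'' After shrinking $B=C\times I$, \cite[Theorem 4.2]{Fuji} gives bounds $M$ and $N$ such that every fiber $(X_s)_x$ has at most $M$ points and $N$ maximal closed intervals, for all sufficiently small $s$ and all $x\in C$ \emph{simultaneously}; stacking $2N+1$ closed subintervals $I_k$ of $I$ and forming the CBD sets $Y^k_s=\{x\in C\;|\;I_k\subset (X_s)_x\}$, a pigeonhole argument converts the merely fiberwise statement ``$(X_s)_x$ has interior in each $I_k$'' into the uniform statement ``some fixed $I_k$ is wholly contained in $(X_s)_x$,'' which the induction on ambient dimension can then close by applying the hypothesis to $C=\bigcup_{s}\bigcup_{k}Y^k_s$. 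That uniform $N$ is exactly what substitutes for the Baire-type and generic-continuity facts your argument presupposes. Your scheme could be salvaged, but only by strengthening the induction hypothesis so that in dimension $n$ you may also assume the Baire property and Lemma \ref{lem:cont_function} over $R^{n-1}$ (these do follow from the lemma in lower dimensions), and by replacing $U$ with the uniform-width sets $T_s=\{x'\;|\;\exists t,\ [t-s,t+s]\subset (X_{s_0})_{x'}\}$, a definable decreasing family of CBD sets with union $\widehat B$, to which the induction hypothesis and \cite[2.8(2)]{DMS} apply; at that point you would have reconstructed the paper's Lemmas \ref{lem:interval} and \ref{lem:cont_function} rather than found a shortcut.
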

\begin{proof}
We prove the lemma following the same strategy as the proof of \cite[3.1]{DMS}.
Let $X$ be a definable subset of $R^n$.
We prove the lemma by the induction on $n$.
We first consider the case in which $n=1$.
Assume that $\myint(X_s)=\emptyset$ for all $s>0$.
Fix an arbitrary point $a \in R$.
There exist a positive integer $N$, an interval $I$ with $a \in I$ and $t>0$ such that, for any $0<s<t$, $I \cap X_s$ contains an open interval or consists of at most $N$ points by \cite[Theorem 4.2]{Fuji}. 
The sets $I \cap X_s$ consist of at most $N$ points because $\myint(X_s)=\emptyset$.
We get $|X \cap I|=\left|\bigcup_{s>0}(I \cap X_s)\right| \leq N$.
In particular, $X$ has an empty interior.

We next consider the case in which $n>1$.
Assume that $X$ has a nonempty interior.
We show that the definable set $X_s$ has a nonempty interior for some $s>0$.
A closed box $B=C \times I \subset R^{n-1} \times R$ is contained in $X$.
We have $B=\bigcup_{s>0}(B \cap X_s)$.
Hence, we may assume that $X$ is a closed box $B$ without loss of generality.

Shrinking $B$ if necessary, we may assume that the fiber $(X_s)_x$ consists of at most $M$ points and $N$ closed intervals for some $M>0$, $N>0$ and any sufficiently small $s>0$ and $x \in C$ by \cite[Theorem 4.2]{Fuji}.
Set $I=[c_1,c_2]$.
Take $2N$ distinct points in the open interval $(c_1,c_2)$, say $b_1, \ldots, b_{2N}$.
We may assume that $b_i < b_j$ whenever $i < j$.
Set $b_0=c_1$ and $b_{2N+1}=c_2$.
Put $I_j=[b_{j-1},b_j]$ for all $1 \leq j \leq 2N+1$.

Consider the sets $Y^k_s=\{x \in C\;|\; I_k \subset (X_s)_x\}$ for all $s>0$ and $1 \leq k \leq 2N+1$.
They are CBD.
Therefore, $\left\{ \bigcup_{k=1}^{2N+1} Y^k_s \right\}_{s>0}$ is a definable decreasing family of CBD sets.
We demonstrate that $C=\bigcup_{s}\bigcup_{k=1}^{2N+1} Y^k_s$.
Let $x \in C$ be fixed.
We have only to show that $I_k \subset (X_s)_x$ for some $k$ and $s$.
For any $k$, there exists $s_k>0$ such that $\myint(I_k \cap (X_{s_k})_x) \not= \emptyset$ by the induction hypothesis because $\{I_k \cap (X_s)_x\}_{s>0}$ is a decreasing family of CBD sets with $I_k = \bigcup_{s} I_k \cap (X_s)_x$. 
Take $s=\min\{s_k\;|\;1 \leq k \leq 2N+1\}$.
We have $\myint(I_k \cap (X_s)_x) \not=\emptyset$ for all $1 \leq k \leq 2N+1$.
Assume that $I_k \not\subset (X_s)_x$ for all $k$.
A maximal closed interval in $(X_s)_x$ should be contained in $I_k$, $I_k \cup I_{k+1}$ or $I_{k-1} \cup I_k$ for some $k$.
Therefore, $\myint(I_j \cap (X_s)_x)$ is empty for some $1 \leq j \leq 2N+1$.
Contradiction.
We have proven that $I_k \subset (X_s)_x$ for some $k$ and $s$.

Apply the induction hypothesis to $C=\bigcup_{s>0}\bigcup_{k=1}^{2N+1} Y^k_s$.
The set $\bigcup_{k=1}^{2N+1} Y^k_s$ has a nonempty interior for some $s>0$.
The CBD set $Y^k_s$ has a nonempty interior for some $k$ by \cite[Theorem 3.3]{Fuji}.
The CBD set $X_s$ has a nonempty interior because $I_k \times Y^k_s$ is contained in $X_s$.
\end{proof}

\begin{lemma}\label{lem:basic2}
Assume that $\mathcal R$ is definably Baire.
Let $X$ be a definable set and $\{X_{r,s}\}_{r>0,s>0}$ be a $\myds$-family with $X=\bigcup_{r>0,s>0}X_{r,s}$.
The CBD set $X_{r,s}$ has a nonempty interior for some $r>0$ and $s>0$ if $X$ has a nonempty interior.
\end{lemma}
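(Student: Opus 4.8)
The plan is to argue by contradiction, treating the two parameters of the $\myds$-family separately: I will handle the increasing parameter $r$ by means of the definably Baire hypothesis and the decreasing parameter $s$ by means of Lemma \ref{lem:basic1}. So I would assume that $\myint(X_{r,s})=\emptyset$ for all $r>0$ and $s>0$, and aim to deduce $\myint(X)=\emptyset$, contradicting the hypothesis that $X$ has nonempty interior.

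First I would reduce to the bounded case. Since $X$ has nonempty interior, it contains a closed box $B$ with $\myint(B)\not=\emptyset$. Replacing each $X_{r,s}$ by $X_{r,s}\cap B$ yields a $\myds$-family whose union is $B$, and since $\myint$ is monotone and $X_{r,s}\cap B\subset X_{r,s}$, it suffices to produce some $X_{r,s}\cap B$ with nonempty interior. Hence I may assume $X=B$ is a closed bounded box, which guarantees that all the auxiliary sets constructed below are bounded. Next, for each fixed $s>0$, the family $\{X_{r,s}\}_{r>0}$ is a definable increasing family of CBD sets; under the contradiction hypothesis each $X_{r,s}$ has empty interior, and being closed it satisfies $\myint(\overline{X_{r,s}})=\myint(X_{r,s})=\emptyset$. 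The definably Baire hypothesis then gives $\myint(Y_s)=\emptyset$, where $Y_s:=\bigcup_{r>0}X_{r,s}$.

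I would then set $Z_s:=\overline{Y_s}$. Each $Z_s$ is CBD (closed, contained in $B$, and definable), and since $X_{r,s'}\subset X_{r,s}$ for $s\leq s'$ the family $\{Z_s\}_{s>0}$ is a definable decreasing family of CBD sets; moreover $\bigcup_{s>0}Z_s=B=X$ because $Y_s\subset Z_s\subset\overline{B}=B$ and $\bigcup_{s>0}Y_s=\bigcup_{r>0,s>0}X_{r,s}=B$. Applying Lemma \ref{lem:basic1} to $\{Z_s\}_{s>0}$, and using that $X$ has nonempty interior, some $Z_s=\overline{Y_s}$ must have nonempty interior.

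The main obstacle, and the step where the structural hypotheses genuinely enter, is to contradict this last conclusion, i.e.\ to pass from $\myint(Y_s)=\emptyset$ to $\myint(\overline{Y_s})=\emptyset$; a priori the closure of a set with empty interior can acquire interior. Here I would invoke the fact, already used in the proof of Proposition \ref{prop:baire_basic}, that for any definable set $A$ local definable cell decomposition \cite[Theorem 4.2]{Fuji} yields $\myint(\overline{A})\not=\emptyset$ if and only if $\myint(A)\not=\emptyset$; equivalently $\dim\overline{A}=\dim A$ via the frontier inequality \cite[Theorem 5.6]{Fuji}. Since $\myint(Y_s)=\emptyset$, this forces $\myint(\overline{Y_s})=\myint(Z_s)=\emptyset$ for every $s>0$, contradicting the output of Lemma \ref{lem:basic1}. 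Therefore $X_{r,s}$ has nonempty interior for some $r>0$ and $s>0$.
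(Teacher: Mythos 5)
Your proof is correct, and it runs on the same two engines as the paper's: the definably Baire property to handle the increasing parameter $r$, and Lemma \ref{lem:basic1} to handle the decreasing parameter $s$. The difference is the order in which you wire them together. The paper first unions over $s$, setting $X_r=\bigcup_{s>0}X_{r,s}$ (after intersecting $X_{r,s}$ with $[-r,r]^n$ so that each $X_r$ is bounded), applies the Baire property once, through Proposition \ref{prop:baire_basic}, to the increasing family $\{X_r\}_{r>0}$ to find one $r$ with $\myint(X_r)\neq\emptyset$, and then applies Lemma \ref{lem:basic1} to the honest CBD decreasing family $\{X_{r,s}\}_{s>0}$ for that fixed $r$; this gives a direct proof in which no closures are ever taken. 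You instead fix $s$, apply Baire to each slice $\{X_{r,s}\}_{r>0}$ (a small bonus of your ordering: these members are closed, so the raw definition of definably Baire applies and Proposition \ref{prop:baire_basic} is not needed at this step), and only then union over $r$; the price is that the sets $Y_s$ are no longer closed, so to feed them to Lemma \ref{lem:basic1} you must pass to $\overline{Y_s}$ and then invoke the equivalence $\myint(\overline{A})\neq\emptyset$ iff $\myint(A)\neq\emptyset$ for definable $A$, which follows from local definable cell decomposition \cite[Theorem 4.2]{Fuji}. You correctly flag this as the crux, and the fact is legitimate --- it is precisely what the paper's proof of Proposition \ref{prop:baire_basic} asserts --- so the two arguments ultimately rest on the same foundation; the paper's ordering simply hides the non-closedness issue inside Proposition \ref{prop:baire_basic} and avoids the contradiction framing. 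Your boundedness reduction (intersecting with a fixed closed box inside $X$) is also slightly simpler than the paper's (intersecting with $[-r,r]^n$, which only makes each $X_r$ bounded rather than the whole union), and both are valid.
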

\begin{proof}
Let $X$ be a definable subset of $R^n$.
Set $X'_{r,s}=X_{r,s} \cap [-r,r]^n$.
We have $X=\bigcup_{r>0,s>0}X_{r,s}'$.
We may assume that $X_r=\bigcup_{r>0,s>0}X_{r,s}$ is bounded considering $X'_{r,s}$ instead of $X_{r,s}$.
The lemma is now immediate from Proposition \ref{prop:baire_basic} and Lemma \ref{lem:basic1}.
\end{proof}

\section{On definably Baire property}\label{sec:baire}
 We demonstrate that the structure $\mathcal R$ is definably Baire.
 
\begin{lemma}\label{lem:interval}
Let $X$ be a bounded definable subset of $R^{n+1}$.
Set 
\[
S=\{x \in R^n\;|\; X_x \text{ contains an open interval}\}\text{.}
\]
The set $S$ has an empty interior if $X$ has an empty interior.
\end{lemma}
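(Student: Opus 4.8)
The plan is to argue by contraposition: assuming that $S$ has a nonempty interior, I would produce an open box contained in $X$, which gives $\myint(X)\neq\emptyset$. Fix an open box $B\subseteq S$. For $x\in R^n$ write $U_x=\myint(X_x)$ for the interior of the fiber $X_x$ taken in $R$; the set $\{(x,y) : y\in U_x\}$ is definable, and by definition $X_x$ contains an open interval exactly when $U_x\neq\emptyset$, so $U_x\neq\emptyset$ for every $x\in B$. The genuine difficulty is that the intervals filling the fibers may move with $x$ and have no common location, so the real work is to extract a single box from them; the tools for pinning down their location will be definable completeness together with local o-minimality.

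First I would select, in a definable way, the leftmost interval of each fiber. For $x\in B$ set $a(x)=\inf U_x$, which exists in $R$ because $U_x$ is nonempty and bounded below ($X$ is bounded) and $\mathcal R$ is definably complete, and set $b(x)=\inf\{\,y>a(x) : y\notin U_x\,\}$. Both $a$ and $b$ are definable functions on $B$. Local o-minimality applied to $U_x$ at the point $a(x)$ shows that in a small interval to the right of $a(x)$ the open set $U_x$ is a finite union of open intervals, the leftmost of which has left endpoint $a(x)$; hence $(a(x),a(x)+\eta)\subseteq U_x$ for some $\eta>0$, which forces $a(x)<b(x)$. By the very definition of $b(x)$ one then gets $(a(x),b(x))\subseteq U_x\subseteq X_x$, so the region $G=\{(x,y) : x\in B,\ a(x)<y<b(x)\}$ is a definable subset of $X$ all of whose fibers over $B$ are nonempty open intervals.

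Finally I would make $a$ and $b$ simultaneously continuous on a subbox. Apply the local definable cell decomposition theorem \cite[Theorem 4.2]{Fuji} on a box $B_0\subseteq B$ adapted to $a$ and $b$, so that each is continuous on each cell. Since $B_0$ is open it has dimension $n$, so its decomposition contains an $n$-dimensional, hence open, cell $D$; on $D$ both $a$ and $b$ are continuous. Pick $x_1\in D$ and constants $c,d$ with $a(x_1)<c<d<b(x_1)$. By continuity there is a subbox $B'\subseteq D$ containing $x_1$ with $a(x)<c$ and $b(x)>d$ for all $x\in B'$, whence $B'\times(c,d)\subseteq G\subseteq X$. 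Thus $X$ has nonempty interior, which finishes the contrapositive.

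The main obstacle is exactly this uniformity step, and the two points I would check most carefully are the following. The existence of the leftmost fiber-interval with $a(x)<b(x)$ is where local o-minimality of the one-variable fibers is essential, since it rules out configurations in which the pieces of $U_x$ accumulate at $a(x)$. And the continuity step must use only cell decomposition: here one needs merely that the definable functions $a$ and $b$ are continuous on \emph{some} open subbox, which is a direct output of \cite[Theorem 4.2]{Fuji}, so the argument invokes neither Corollary \ref{cor:discont} nor Theorem \ref{thm:main} and remains free of circularity with the dimension inequality it is meant to support.
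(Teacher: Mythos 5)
Your construction of the leftmost fiber-interval is sound: definable completeness gives $a(x)=\inf U_x$ and $b(x)$, and local o-minimality at $a(x)$ correctly rules out accumulation of components, so $(a(x),b(x))\subseteq X_x$ with $a(x)<b(x)$. The genuine gap is exactly at the step you flag as the main obstacle, and your resolution of it does not work: generic continuity of the definable functions $a$ and $b$ on some open subbox is \emph{not} a direct output of the local definable cell decomposition theorem \cite[Theorem 4.2]{Fuji}. That theorem partitions finitely many definable \emph{sets} in a small box around a chosen point; to get piecewise continuity of $a$ one must apply it in $R^{n+1}$ to the graph $\Gamma(a)$, and a decomposition of a box $B'\times I$ around a point $(x_0,a(x_0))$ only yields continuity of $a$ on cells partitioning $\{x\in B' : a(x)\in I\}$. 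That set can have empty interior, and since the decomposition is local in the fiber direction, no choice of base point fixes this. (In the o-minimal case one escapes because the decomposition is global in the last coordinate; here one would need to exhaust the fiber direction by an increasing definable family of intervals, and concluding that some member of such a family has interior is precisely a definably Baire statement --- note that boundedness of $a$ does not help, since in a non-archimedean group $[-M,M]$ need not be covered by finitely many short intervals.)

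Within this paper's development the appeal is moreover circular: ``a definable map on an open set is continuous on a nonempty open subset'' is exactly Lemma \ref{lem:cont_function}, which is proved from the definably Baire property (Theorem \ref{thm:baire}), which rests on Lemma \ref{lem:easy_case}, which in turn invokes the very Lemma \ref{lem:interval} you are proving. This is why the paper's own proof avoids any continuity argument: it exhausts $X$ by a definable decreasing family of CBD sets $\{X_s\}_{s>0}$ (Lemma \ref{lem:decreasing}), replaces $S$ by the CBD sets $S_s=\{x : \exists t,\ [t-s,t+s]\subset (X_s)_x\}$, proves $S=\bigcup_{s>0}S_s$ fiberwise via Lemma \ref{lem:basic1}, and then applies Lemma \ref{lem:basic1} once more together with \cite[2.8(2)]{DMS} to transfer $\myint(S_s)\neq\emptyset$ into $\myint(X_s)\neq\emptyset$. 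Note that Lemma \ref{lem:basic1} needs only boundedness and decreasing families of \emph{closed} definable sets, not Baire-ness, which is what makes it usable at this stage; to repair your argument you would have to replace the uniformity-via-continuity step by this CBD machinery, at which point you have essentially reproduced the paper's proof.
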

\begin{proof}
Assume that $S$ has a nonempty interior.
There exists a definable decreasing family of CBD sets $\{X_s\}_{s>0}$ with $X=\bigcup_{s>0}X_s$ by Lemma \ref{lem:decreasing}.
Set $S_s=\{x \in R^n\;|\; \exists t \in R,\ [t-s,t+s] \subset (X_s)_x\}$ for all $s>0$.
They are CBD  by \cite[Lemma 1.7]{M} because they are the projection images of the CBD sets $S_s=\{(x,t) \in R^n \times R\;|\;  [t-s,t+s] \subset (X_s)_x\}$.
We have $S=\bigcup_{s>0}S_s$.
In fact, it is obvious that $\bigcup_{s>0}S_s \subset S$ by the definition.
Take a point $x \in S$.
There exist $t \in R$ and $s_1>0$ with $[t-s_1,t+s_1] \subset X_x$.
In particular, we have $\myint(X_x) \not= \emptyset$.
We have $\myint(X_{s_2})_x \not= \emptyset$ for some $s_2>0$ by Lemma \ref{lem:basic1}.
We may assume that $[t-s_1,t+s_1] \subset (X_{s_2})_x$ by taking new $s_1$ and $t$ again. 
Set $s=\min\{s_1,s_2\}$, then we have $x \in S_s$.
We have demonstrated that $S=\bigcup_{s>0}S_s$.

Again by Lemma \ref{lem:basic1}, we have $\myint(S_s) \not= \emptyset$ for some $s>0$.
We obtain $\myint(X_s) \not= \emptyset$ by \cite[2.8(2)]{DMS}.
We get $\myint(X) \not= \emptyset$.
\end{proof}

We reduce to the one-dimensional case.

\begin{lemma}\label{lem:easy_case}
The structure $\mathcal R$ is definably Baire if the union $\bigcup_{r>0}S_r$ of any definable increasing family $\{S_r\}_{r>0}$ of subsets of $R$ has an empty interior whenever $S_r$ have empty interiors for all $r>0$. 
\end{lemma}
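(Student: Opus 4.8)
The plan is to argue by induction on the ambient dimension $n$, after using Proposition \ref{prop:baire_basic} to recast the definably Baire property in the more convenient form. Write $P(n)$ for the statement: for every definable increasing family $\{X_r\}_{r>0}$ of subsets of $R^n$ with $\myint(X_r)=\emptyset$ for all $r$, the union $\bigcup_{r>0}X_r$ has empty interior. By Proposition \ref{prop:baire_basic}, once $P(n)$ holds for every $n$, the structure $\mathcal R$ is definably Baire. The base case $P(1)$ is exactly the hypothesis of the lemma, so the whole content lies in the inductive step $P(n)\Rightarrow P(n+1)$.

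For the inductive step I would assume $P(n)$ and take a definable increasing family $\{X_r\}_{r>0}$ of subsets of $R^{n+1}$ with $\myint(X_r)=\emptyset$ for all $r$, supposing for contradiction that $\bigcup_{r>0}X_r$ has nonempty interior. Then it contains an open box, which I may shrink to a bounded open box of the form $U\times V$ with $U\subseteq R^n$ and $V\subseteq R$ bounded open boxes. Replacing $X_r$ by the bounded definable set $X_r'=X_r\cap(U\times V)$, I still have $\myint(X_r')=\emptyset$ (since $\myint(X_r')\subseteq\myint(X_r)$), while now $\bigcup_{r>0}X_r'=U\times V$ because $U\times V$ lies in the union.

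Next I would move to the base direction via Lemma \ref{lem:interval}. Set $S_r=\{x\in R^n \mid (X_r')_x \text{ contains an open interval}\}$. This is a definable family, and it is increasing, since $X_r'\subseteq X_{r'}'$ forces $(X_r')_x\subseteq(X_{r'}')_x$. Because each $X_r'$ is bounded with empty interior, Lemma \ref{lem:interval} yields $\myint(S_r)=\emptyset$ for every $r$. I then claim $U\subseteq\bigcup_{r>0}S_r$: for a fixed $x\in U$ the family $\{(X_r')_x\}_{r>0}$ is a definable increasing family of subsets of $R$ whose union is $(U\times V)_x=V$, an open interval, hence of nonempty interior; by the contrapositive of the one-dimensional hypothesis some $(X_r')_x$ has nonempty interior, that is, contains an open interval, so $x\in S_r$. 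Thus $\bigcup_{r>0}S_r$ has nonempty interior, contradicting $P(n)$ applied to the definable increasing family $\{S_r\}_{r>0}$. This contradiction establishes $P(n+1)$ and completes the induction.

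The main obstacle is precisely the passage from a fiber of the union containing an interval to a fiber of a single member containing an interval: a priori the increasing union $V=\bigcup_{r>0}(X_r')_x$ can be an interval even though no individual $(X_r')_x$ contains one. This is exactly where the one-dimensional hypothesis is indispensable, applied fiberwise for each $x$, and it is what makes Lemma \ref{lem:interval} usable to transport the empty-interior information from $R^{n+1}$ down to $R^n$.
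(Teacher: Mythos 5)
Your proof is correct and follows essentially the same route as the paper: induction on the ambient dimension, with Lemma \ref{lem:interval} pushing the empty-interior information down one coordinate and the one-dimensional hypothesis applied fiberwise, all framed via Proposition \ref{prop:baire_basic}. The only cosmetic difference is that the paper first uses the induction hypothesis to pick a single point $x$ outside $\bigcup_{r>0}Y_r$ and derives the contradiction at that one fiber, whereas you show every $x\in U$ lies in some $S_r$ and then contradict the induction hypothesis; these are the same argument arranged contrapositively.
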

\begin{proof}
Let $\{X_r\}_{r>0}$ be a definable increasing family of subsets of $R^n$.
Set $X=\bigcup_{r>0}X_r$.
We have only to show that the definable set $X_r$ has a nonempty interior for some $r>0$ if $X$ has a nonempty interior.
The definable set $X$ contains a bounded open box $B$.
We may assume that $X$ is a bounded open box $B$ without loss of generality by considering $B$ and $\{X_r \cap B\}_{r>0}$ in place of $X$ and $\{X_r\}_{r>0}$, respectively. 

We prove the lemma by the induction on $n$.
The lemma is obvious when $n=0$.
We next consider the case in which $n>0$.
We lead to a contradiction assuming that $X_r$ have empty interiors for all $r>0$.
Let $\pi:R^n \rightarrow R^{n-1}$ be the projection forgetting the last coordinate.
We have $B=B_1 \times I$ for some open box $B_1$ in $R^{n-1}$ and some open interval $I$.
Consider the set $Y_r=\{x \in B_1\;|\;\ \text{the fiber }(X_r)_x \text{ contains an open interval}\}$ for all $r>0$.
They have empty interiors by Lemma \ref{lem:interval}. 
The union $\bigcup_{r>0} Y_r$ has an empty interior by the induction hypothesis.
In particular, we have $B_1 \not=\bigcup_{r>0} Y_r$ and we can take a point $x \in B_1 \setminus \left(\bigcup_{r>0} Y_r\right)$.
Since $x \not\in \bigcup_{r>0} Y_r$, the fiber $(X_r)_x$ does not contain an open interval for any $r>0$.
Therefore, the union $\bigcup_{r>0}(X_r)_x$ has an empty interior by the assumption.
On the other hand, we have $I=\bigcup_{r>0}(X_r)_x$ because $B=\bigcup_{r>0}X_r$.
It is a contradiction.
\end{proof}

We prove that $\mathcal R$ is definably Baire now. 
\begin{theorem}\label{thm:baire}
A definably complete uniformly locally o-minimal expansion of the second kind of a densely linearly ordered abelian group is definably Baire.
\end{theorem}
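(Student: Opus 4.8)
The plan is to invoke Lemma~\ref{lem:easy_case}, which reduces the assertion to the one-dimensional case: it suffices to show that if $\{S_r\}_{r>0}$ is a definable increasing family of subsets of $R$ with $\myint(S_r)=\emptyset$ for every $r>0$, then $\bigcup_{r>0}S_r$ has empty interior. I would argue by contradiction, assuming that this union contains an open interval $(a,b)$. Since the family is increasing and covers $(a,b)$, for each $x\in(a,b)$ the definable set $\{r>0 \mid x\in S_r\}$ is a nonempty upward-closed subset of $R$ bounded below by $0$, so definable completeness lets me define a definable function $g:(a,b)\to R$ by $g(x)=\inf\{r>0 \mid x\in S_r\}$. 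The crucial feature of $g$ is that $x\in S_r$ whenever $r>g(x)$.

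Next I would apply local monotonicity to $g$ \cite{Fuji} to locate an open subinterval $J\subset(a,b)$ on which $g$ is continuous and either constant or strictly monotone. On any closed subinterval $[c,d]\subset J$ such a $g$ is bounded above, say by $M$ (by its value at an endpoint). Then for any fixed $r_0>M$ we have $g(x)<r_0$, hence $x\in S_{r_0}$, for every $x\in(c,d)$; thus $(c,d)\subset S_{r_0}$ and $\myint(S_{r_0})\neq\emptyset$, contradicting the hypothesis. This contradiction establishes the one-dimensional statement, and Lemma~\ref{lem:easy_case} then yields the theorem.

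The main obstacle I anticipate is the step that converts a pointwise membership statement into a uniform one: a priori the threshold $g(x)$ could be unbounded as $x$ ranges over $(a,b)$ (this is precisely what fails when the structure is \emph{not} definably Baire), so nothing forces a single $S_{r}$ to contain an entire subinterval. The combination of definable completeness (to make $g$ well defined as a genuine element of $R$) and local monotonicity (to trap $g$ in a bounded range on some subinterval) is exactly what rules this out, and some care is needed to confirm that $g$ is indeed definable and that local monotonicity applies in this setting. The remaining bookkeeping—verifying that the hypothesis $\myint(S_r)=\emptyset$ is the appropriate condition through Proposition~\ref{prop:baire_basic}—is routine.
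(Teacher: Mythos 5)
Your proposal is correct, but in the one-dimensional case it takes a genuinely different route from the paper. Both proofs start with the reduction via Lemma~\ref{lem:easy_case} and both use definable completeness to define an infimum function, but the functions are dual to one another. The paper fixes a point $a$ in the interior of $\bigcup_{r>0}X_r$ and studies $f(r)=\inf\{x>a \mid x\in X_r\}$, a function of the \emph{family parameter}; it shows $f(r)\to a$ as $r\to\infty$, while the image of $f$ near $a$ must be finite (local o-minimality makes $I\cap\operatorname{Im}(f)$ a finite union of points and intervals, and the discreteness and closedness of each $X_r$ rules out the intervals), a contradiction. You instead study $g(x)=\inf\{r>0\mid x\in S_r\}$, a function of the \emph{space variable}, and invoke the local monotonicity theorem of \cite{Fuji} to trap $g$ below some $M$ on a subinterval, so that the whole subinterval lies in $S_{r_0}$ for any $r_0>M$. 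Your key step checks out: since the family is increasing, $\{r>0\mid x\in S_r\}$ is upward closed and definable, so $r>g(x)$ indeed implies $x\in S_{r_0}$; and local monotonicity does hold for uniformly locally o-minimal structures of the second kind, hence applies to $g$. Note that you need even less than you claim: continuity of $g$ on $J$ is irrelevant, since constancy or monotonicity alone bounds $g$ on a closed subinterval $[c,d]\subset J$ by $\max(g(c),g(d))$. The trade-off: your argument is shorter and conceptually a uniform-boundedness argument, but it leans on the local monotonicity theorem, a comparatively heavy tool; the paper's argument is more elementary, using only the locally finite structure of definable subsets of $R$ and the discreteness of the sets $X_r$, and would be preferable in a setting where a monotonicity theorem is not yet available.
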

\begin{proof}
Let $\mathcal R=(R,<,+,0,\ldots)$ be the considered structure.
Let $\{X_r\}_{r>0}$ be a definable increasing family of subsets of $R$.
Set $X=\bigcup_{r>0}X_r$.
We have only to show that the definable set $X$ has an empty interior if $X_r$ have empty interiors for all $r>0$ by Lemma \ref{lem:easy_case}.
Note that $X_r$ are discrete and closed because the structure is locally o-minimal.

Assume that $X$ has a nonempty interior.
The definable set $X$ contains an open interval.
Take a point $a$ contained in the open interval.
Consider the definable function $f:\{r \in R\;|\;r >0\} \rightarrow \{x \in R\;|\; x  > a\}$ defined by $f(r)= \inf \{x >a \;|\; x \in X_r\}$.
It is obvious that $f$ is a decreasing function because $\{X_r\}_{r>0}$ is a definable increasing family. 
We demonstrate that $\lim_{r \to \infty}f(r)=a$. 
Let $b$ be an arbitrary point sufficiently close to $a$ with $b>a$.
Since $X=\bigcup_{r>0}X_r$ contains a neighborhood of $a$, there exists a positive element $r  \in R$ with $b \in X_r$.
We have $a<f(r) \leq b$ by the definition of $f$.
We have shown that $\lim_{r \to \infty}f(r)=a$. 

Consider the image $\operatorname{Im}(f)$ of the function $f$.
Take a sufficiently small open interval $I$ containing the point $a$ with $I \subset X$.
The intersection $I \cap \operatorname{Im}(f)$ is a finite union of points and open intervals because it is definable in the locally o-minimal structure $\mathcal R$.
Take an arbitrary point $b \in \operatorname{Im}(f)$ and a point $r>0$ with $b=f(r)$.
Since $X_r$ is closed, we have $b \in X_r$.
Any point $b' \in \operatorname{Im}(f)$ with $b' > b$ is also contained in $X_r$.
In fact, take a point $r'>0$ with $b'=f(r')$.
If $r'>r$, the set $X_{r'}$ contains the point $b$ because $X_r \subset X_{r'}$.
We have $b'=f(r') \leq b$ by the definition of the function $f$.
It is a contradiction.
If $r'<r$, we have $b' \in X_{r'} \subset X_r$.

Set $b_1 = \inf\{b' \in \operatorname{Im}(f)\;|\;b'>b\} $.
We have $b_1 \in X_r$ and $b_1> b$ because $\{b' \in \operatorname{Im}(f)\;|\;b'>b\} \subset X_r$ and $X_r$ is closed and discrete.
The open interval $(b,b_1)$ has an empty intersection with $\operatorname{Im}(f)$.
We have shown that $I \cap \operatorname{Im}(f)$ does not contain an open interval.
The set $I \cap \operatorname{Im}(f)$ consists of finite points.
It is a contradiction to the fact that $\lim_{r \to \infty}f(r)=a$. 
\end{proof}

\begin{remark}
It is already known that a definably complete expansion of an ordered field is definably Baire \cite{H}.
Our research target is a uniformly locally o-minimal structure of the second kind.
A uniformly locally o-minimal expansion of the second kind of an ordered field is o-minimal by \cite[Proposition 2.1]{Fuji}.
In this case, it is trivially definably Baire by the definable cell decomposition theorem \cite[Chapter 3, (2,11)]{vdD}.
We have more interest in the case in which the structure is not an expansion of an ordered field.
\end{remark}

\section{Proof of Theorem \ref{thm:main}}\label{sec:proof}

We demonstrate Theorem \ref{thm:main} in this section.
We first show that a definable map is continuous on an open subset of the domain of definition.

\begin{lemma}\label{lem:cont_function}
A definable map $f:U \rightarrow R^n$ defined on an open set $U$ is continuous on a nonempty definable open subset of $U$.
\end{lemma}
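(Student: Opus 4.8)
The plan is to reduce to the case $n=1$ and then to show that the set $D$ of discontinuity points of $f$ has empty interior. The reduction is a finite process: writing $f=(f_1,\dots,f_n)$, I would find a nonempty open set on which $f_1$ is continuous, restrict $f_2$ to it, and repeat, terminating after $n$ steps since a finite intersection of the resulting open sets is again open and nonempty. To see that $\myint(D)=\emptyset$ suffices, recall from the proof of Proposition \ref{prop:baire_basic} that for a definable set the conditions $\myint(\overline{D})\neq\emptyset$ and $\myint(D)\neq\emptyset$ are equivalent (via \cite[Theorem 4.2]{Fuji}). Hence $\myint(D)=\emptyset$ already forces $\myint(\overline{D})=\emptyset$, and then $f$ is continuous on the nonempty definable open set $U\setminus\overline{D}$. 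So everything comes down to proving $\myint(D)=\emptyset$ for a definable $f:U\to R$ with $U\subseteq R^m$ open.

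Before attacking $D$ I would first arrange that $f$ is bounded. The family $\{x\in U : -r\le f(x)\le r\}_{r>0}$ is a definable increasing family (genuinely increasing in the index $r$, so no reparametrization is needed) whose union is the open set $U$; thus its union has nonempty interior, and by Theorem \ref{thm:baire} together with the equivalence just recalled, one of its members has nonempty interior $W$. Replacing $U$ by $W$, I may assume $f$ is bounded. This step is the device that circumvents the main nuisance of the local setting, namely that one cannot localize in the codomain: a naive application of local cell decomposition only controls $f$ on the preimage of a fixed codomain box, which need not be fat near a given point.

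With $f$ bounded I would work with the graph $\Gamma=\{(a,f(a)):a\in U\}$, a bounded definable subset of $R^{m+1}$, and its frontier $F=\overline{\Gamma}\setminus\Gamma$. Since $\Gamma$ is a graph, Lemma \ref{lem:dim_pre} gives $\dim\Gamma\le m$, so $\overline{\Gamma}$ has dimension $\le m$ by \cite[Theorem 5.6]{Fuji} and hence empty interior in $R^{m+1}$; thus $F$ has empty interior too. The point of this reformulation is that $a\in D$ if and only if $F_a\neq\emptyset$. I would then split $D$ according to the shape of the fiber $F_a$. For the points where $F_a$ contains an open interval, Lemma \ref{lem:interval} applies directly (here the boundedness of $\Gamma$ is exactly what that lemma requires) and shows this part has empty interior. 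For the one‑variable slice discontinuities, the set $E=\{(x,y): y\mapsto f(x,y)\text{ is discontinuous at }y\}$ has finite fibers by the local monotonicity theorem of \cite{Fuji}, so $\dim E\le\dim\pi(E)\le m-1$ by Lemma \ref{lem:dim_pre}, whence $\myint(E)=\emptyset$. The whole assertion would then be organized as an induction on $m$, the base case $m=1$ being local monotonicity. Throughout I must be careful to invoke only Lemma \ref{lem:dim_pre}, the special case in which the fibers are $0$‑dimensional, and never the inequality $\dim(f(X))\le\dim(X)$ itself, which is the theorem under proof.

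The step I expect to be the main obstacle is the remaining, genuinely transverse, part of $D$: the points at which every one‑variable slice is continuous yet $f$ is jointly discontinuous, the fiber $F_a$ being finite and nonempty. Here fiberwise continuity does not by itself yield joint continuity, and I would have to descend in $m$ by fixing coordinates and feeding the lower‑dimensional information back through Lemma \ref{lem:dim_pre} and Lemma \ref{lem:interval}, assembling it with the inductive hypothesis. It is precisely at this junction that the tameness packaged in the definably Baire property (an increasing union of nowhere dense definable sets is nowhere dense) is meant to do the work of turning the separate, slicewise control into the single conclusion $\myint(D)=\emptyset$.
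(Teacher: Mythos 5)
Your proposal has a genuine gap, and it sits exactly where you say it does: the ``transverse'' case, where the fiber $F_a$ of the frontier of the graph is nonempty and zero-dimensional (slice-wise continuity but joint discontinuity). Everything up to that point is sound --- the reduction to $n=1$, the reduction to bounded $f$ via Theorem \ref{thm:baire}, the identification of $D$ with $\{a : F_a \neq \emptyset\}$ (which itself needs a finite-intersection-property argument for CBD sets, valid by definable completeness), and the two handled sub-cases via Lemma \ref{lem:interval} and Lemma \ref{lem:dim_pre}. But the unresolved case is not a technical residue; it is the entire difficulty. What you need there is that $\pi(F')$ has empty interior, where $F'$ is the part of the frontier $F$ with nonempty zero-dimensional fibers and $\pi:R^{m+1}\rightarrow R^m$ is the projection. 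Since $\dim F \leq m-1$, this is precisely an instance of $\dim(\pi(F')) \leq \dim(F')$, i.e.\ of the projection dimension inequality --- which is Theorem \ref{thm:main}, the very statement this lemma is needed to prove. Lemma \ref{lem:dim_pre} only gives the inequality in the harmless direction ($\dim F' \leq \dim \pi(F')$), and a set with empty interior in $R^{m+1}$ can perfectly well project onto a set with nonempty interior in $R^m$ (a hyperplane does), so none of the information you have gathered about $F$ descends to the base without the inequality under proof. Your hope that induction on $m$ plus the definably Baire property will ``turn slicewise control into joint control'' is exactly the hard analytic content (the definable analogue of Baire's theorem on separately continuous functions), and no argument for it is given; as written, the approach is either incomplete or circular.

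For comparison, the paper avoids the discontinuity set entirely. It writes the graph $\Gamma(f)$ as a union of a $\myds$-family $\{X_{r,s}\}$ (Lemma \ref{lem:decreasing2}), projects: each $\pi(X_{r,s})$ is CBD by \cite[Lemma 1.7]{M}, so $\{\pi(X_{r,s}) \cap B\}$ is a $\myds$-family covering a closed box $B \subset U$; by Lemma \ref{lem:basic2} (the Baire input) some $\pi(X_{r,s}) \cap B$ has nonempty interior, and over a closed box $B'$ inside it the set $X_{r,s} \cap (B' \times R^n) = \Gamma(f|_{B'})$ is closed and bounded, whence $f$ is continuous on $\myint(B')$. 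The key trick --- that a definable map whose graph over a box is CBD is automatically continuous there --- is what replaces all of your frontier analysis, and it never requires comparing dimensions across a projection. If you want to salvage your outline, you would need to supply the missing Baire-type argument for separately-continuous-but-jointly-discontinuous definable functions from scratch; it is much simpler to route the Baire property through the graph decomposition as the paper does.
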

\begin{proof}
The structure $\mathcal R$ is definably Baire by Theorem \ref{thm:baire}.
We may use Lemma \ref{lem:basic2} in the proof.

Let $U$ be a definable open subset of $R^m$.
Consider the projection $\pi:R^{m+n} \rightarrow R^m$ onto the first $m$ coordinates.
The notation $\Gamma(f)$ denotes the graph of $f$.
There exists a $\myds$-family $\{X_{r,s}\}_{r,s}$ with $\Gamma(f)=\bigcup_{r,s} X_{r,s}$ by Lemma \ref{lem:decreasing2}. 
Note that $\pi(X_{r,s})$ is CBD by \cite[Lemma 1.7]{M}.
We have $U=\bigcup_{r,s}\pi(X_{r,s})$ and the fiber $\pi^{-1}(x) \cap \Gamma(f)$ is a singleton for any $x \in U$.
Therefore, we obtain $X_{r,s}=\Gamma(f|_{\pi(X_{r,s})})$, where $f|_{\pi(X_{r,s})}$ is the restriction of $f$ to $\pi(X_{r,s})$.
Take a closed box $B$ contained in $U$.
The family $\{\pi(X_{r,s}) \cap B\}$ is a $\myds$-family and $B=\bigcup_{r,s} \pi(X_{r,s}) \cap B$.
The CBD set $\pi(X_{r,s}) \cap B$ has a nonempty interior for some $r$ and $s$ by Lemma \ref{lem:basic2}.
Take a closed box $B'$ contained in $\pi(X_{r,s}) \cap B$.
The set $X_{r,s} \cap (B' \times R^n)=\Gamma(f|_{B'})$ is closed.
Therefore, $f$ is continuous on $\myint(B')$.
\end{proof}

We finally prove Theorem \ref{thm:main}.
\begin{proof}[Proof of Theorem \ref{thm:main}]
We prove the following assertion:
\begin{description}
\item[$(*)$] The inequality $\dim(f(X)) \leq \dim(X)$ holds true for any definable map $f:X \rightarrow R^n$.
\end{description}
Lemma \ref{lem:basic2} is available as in the proof of Lemma \ref{lem:cont_function} for the same reason.

Set $d=\dim(f(X))$.
We demonstrate that $\dim(X) \geq d$.
We can reduce to the case in which the image $f(X)$ is an open box $B$ of dimension $d$.
In fact, there exist an open box $B$ in $R^{d}$ and a definable map $g:B \rightarrow f(X)$ such that the map $g$ is a definable homeomorphism onto its image by the definition of dimension \cite[Definition 5.1]{Fuji}.
Set $Y=f^{-1}(g(B))$ and $h=g^{-1} \circ f|_Y:Y \rightarrow B$.
When $\dim(Y) \geq d$, we get $\dim(X) \geq d$ by \cite[Lemma 5.1]{Fuji} because $Y$ is a subset of $X$. 
We may assume that $f(X)=B$ by considering $Y$ and $h$ instead of $X$ and $f$, respectively.

We next reduce to the case in which the map $f$ is the restriction of a coordinate projection.
Consider the graph $G = \Gamma(f) \subset R^{m+d}$ of the definable map $f$.
Let $\pi:R^{m+d} \rightarrow R^{d}$ be the projection onto the last $d$ coordinates.
We have $\dim(X) \geq \dim (G) \geq d$ by Lemma \ref{lem:dim_pre} when $\dim(G) \geq d$.
We may assume that $f:X \rightarrow B$ is the restriction of the projection $\pi:R^{m+d} \rightarrow R^{d}$ to $X$.

We have a $\myds$-family $\{X_{r,s}\}_{r>0,s>0}$ with $X=\bigcup_{r,s} X_{r,s}$ by Lemma \ref{lem:decreasing2}. 
The family $\{f(X_{r,s})\}_{r>0,s>0}$ is also a $\myds$-family by \cite[Lemma 1.7]{M} because $f$ is the restriction of a projection.
We have $B=\bigcup_{r,s}f(X_{r,s})$.
The CBD set $f(X_{r,s})$ has a nonempty interior for some $r>0$ and $s>0$ by Lemma \ref{lem:basic2}.
We fix such $r>0$ and $s>0$.
Take an open box $U$ contained in $f(X_{r,s})$.
Note that the inverse image $\{y \in X_{r,s}\;|\;f(y)=x\}$ of $x \in U$ is CBD because $f$ is continuous. 
Consider a definable function $\varphi:U \rightarrow X_{r,s}$ given by $\varphi(x)=\operatorname{\mathbf{lexmin}}\{y \in X_{r,s}\;|\;f(y)=x\}$, where the notation $\operatorname{\mathbf{lexmin}}$ denotes the lexicographic minimum defined in \cite{M}.
We can get an open box $V$ contained in $U$ such that the restriction $\varphi|_V$ of $\varphi$ to $V$ is continuous by Lemma \ref{lem:cont_function}.
The definable set $X_{r,s}$ is of dimension $ \geq d$ by the definition of dimension because it contains the graph of the definable continuous map $\varphi|_V$ defined on the open box $V$ in $R^{d}$.
We have $\dim X \geq \dim (X_{r,s})  \geq d$ by \cite[Lemma 5.1]{Fuji}.
We have proven Theorem \ref{thm:main}.
\end{proof}

The proof of Corollary \ref{cor:discont} is the same as that of \cite[Corollary 2.6]{Fuji2}.
We give a proof here because it is brief.
\begin{proof}[Proof of Corollary \ref{cor:discont}]
Let $\mathcal D$ be the set of points at which the definable function $f$ is discontinuous. 
Assume that the domain of definition $X$ is a definable subset of $R^m$.
Let $G$ be the graph of $f$.
We have $\dim(G)=\dim(X)$ by Lemma \ref{lem:dim_pre} and Theorem \ref{thm:main}.
Set $\mathcal E=\{(x,y) \in X \times  R\;|\; y=f(x) \text{ and } f \text{ is discontinuous at }x\}$.
We get $\dim(\mathcal E) < \dim(G)$ by \cite[Theorem 4.2, Corollary 5.3]{Fuji}.
Let $\pi: R^{m+1} \rightarrow  R^m$ be the projection forgetting the last coordinate.
We have $\mathcal D = \pi(\mathcal E)$ by the definitions of $\mathcal D$ and $\mathcal E$.
We finally obtain $\dim(\mathcal D) = \dim(\pi(\mathcal E))  \leq \dim(\mathcal E) < \dim(G)=\dim(X)$ by Theorem \ref{thm:main}.
\end{proof}

\end{document}